\newtheorem{thm}{Theorem}[section]
\newtheorem{lem}[thm]{Lemma}
\newtheorem{prop}[thm]{Proposition}
\newtheorem{quest}[thm]{Question}
\theoremstyle{definition}
\newtheorem{Def}[thm]{Definition}
\newtheorem{rem}[thm]{Remark}
\newtheorem*{ack}{Acknowledgement}
\newtheorem{ex}[thm]{Example}
\numberwithin{equation}{section}
\numberwithin{figure}{section}
\def\rchi{{\hbox{\raise1.5pt\hbox{$\chi$}}}}
\def\a{\alpha}
\def\b{\beta}
\def\gam{\gamma}
\def\Gam{\Gamma}
\newcommand{\bea}{\begin{eqnarray}}
\newcommand{\eea}{\end{eqnarray}}
\newcommand{\be}{\begin{equation}}
\newcommand{\ee}{\end{equation}}
\newcommand{\Mbar}{{\overline{\mathcal{M}}}}
\newcommand{\bP}{{\mathbb{P}}}
\newcommand{\bC}{{\mathbb{C}}}
\newcommand{\bQ}{{\mathbb{Q}}}
\newcommand{\bZ}{{\mathbb{Z}}}
\newcommand{\cH}{{\mathcal{H}}}
\newcommand{\la}{{\langle}}
\newcommand{\ra}{{\rangle}}
\newcommand{\half}{{\frac{1}{2}}}
\newcommand{\rar}{\rightarrow}
\newcommand{\lrar}{\longrightarrow}
\begin{document}
\large
\setcounter{section}{0}

\allowdisplaybreaks

\title[Mirror of orbifold Hurwitz 
numbers]
{Mirror curve of orbifold Hurwitz numbers}

\author[O.\ Dumitrescu]{Olivia Dumitrescu}
\address{O.~Dumitrescu: 
Department of Mathematics\\
Central Michigan University\\
Mount Pleasant, MI 48859}
\email{dumit1om@cmich.edu}
\address{and Simion Stoilow Institute of Mathematics\\
Romanian Academy\\
21 Calea Grivitei Street\\
010702 Bucharest, Romania}

\author[M.\ Mulase]{Motohico Mulase}
\address{M.~Mulase:
Department of Mathematics\\
University of California\\
Davis, CA 95616--8633}
\email{mulase@math.ucdavis.edu}
\address{and Kavli Institute for Physics and Mathematics of the 
Universe\\
The University of Tokyo\\
Kashiwa, Japan}

\thanks{The first author is supported by NSF grant DMS1802082.}

\begin{abstract}
 Edge-contraction
operations  form an effective tool in
various graph enumeration problems, such
as counting Grothendieck's dessins d'enfants
 and simple and double Hurwitz numbers. These
counting problems can be solved by
 a mechanism known as 
  topological recursion, which is a
   mirror B-model corresponding to 
 these counting problems.
 We show that for the case of
 orbifold Hurwitz numbers, the mirror objects, i.e.,
 the spectral curve and the differential forms on it,
 are constructed solely from
 the  edge-contraction operations of the counting
 problem in genus $0$ and one \emph{marked}
 point. This forms a parallelism with 
 Gromov-Witten theory, where genus $0$ 
 Gromov-Witten invariants correspond to mirror
 B-model holomorphic geometry. 
\end{abstract}

\subjclass[2010]{Primary: 14N35, 81T45, 14N10;
Secondary: 53D37, 05A15}

\keywords{Topological  recursion;  ribbon graphs; Hurwitz numbers; mirror curves}

\maketitle

\tableofcontents

\section{Introduction}
\label{sect:intro}

The purpose
 of the present paper is to identify
 the mirror B-model objects that enable us to solve
 certain graph enumeration problems. We consider
 simple and orbifold Hurwitz numbers,
 by giving a graph enumeration formulation for
 these numbers. We then show that the mirror
 of these counting problems are constructed
 from the \emph{edge-contraction operations}
 of \cite{OM3} applied to 
 orbifold 
 Hurwitz numbers for the case of
 \emph{genus $0$ and one-marked point}.

Edge-contraction operations 
  provide an effective method
for  graph enumeration problems. 
It has been noted in 
\cite{DMSS} that the Laplace transform
of  edge-contraction operations on many
counting problems corresponds
to the topological recursion of
\cite{EO2007}. In
 this paper, we  examine the 
construction of  mirror B-models corresponding
to  the simple
and orbifold Hurwitz numbers. 
In general,
enumerative geometry problems, 
such as computation of Gromov-Witten type
invariants, are often solved by 
studying a corresponding problem
on the  \emph{mirror dual} side. 
The effectiveness of the mirror method  relies
on complex analysis and holomorphic 
geometry technique that is available on 
the mirror B-model side.
The  question we consider
in this paper is the following:

\begin{quest}
How do we find the mirror of a given
enumerative problem?
\end{quest}

We give an answer to this question
for a class of graph enumeration problems
that are equivalent to counting 
orbifold Hurwitz numbers. 
The key is  the  edge-contraction operations. 
The base case, or the case for the ``moduli space''
$\Mbar_{0,1}$, of the edge contraction in the
counting problem identifies the mirror dual
object, and a universal mechanism
of complex analysis, known as the
\textbf{topological recursion} of \cite{EO2007},
solves the B-model side of the counting
problem. The solution is a collection of
generating functions of the original counting problem
for all genera.

Bouchard and Mari\~no \cite{BM} conjectured
that generating functions for simple Hurwitz 
numbers could be calculated by the topological 
recursion
of \cite{EO2007}, based on the spectral curve
identified as the \textbf{Lambert curve}
 \be
 \label{Lambert}
 x = y e^{-y}.
 \ee
 Here, the notion of spectral curve
is the mirror dual object for the counting problem.
They arrived at the mirror dual by a consideration
of mirror symmetry of 
\emph{open} Gromov-Witten 
invariants of toric Calabi-Yau threefolds
\cite{BKMP1}. The mirror geometry of a toric
Calabi-Yau threefold is completely determined by
a plane algebraic curve known as the
\emph{mirror curve}. The Lambert curve
\eqref{Lambert} appears as the infinite framing number
limit of the mirror curve of $\bC^3$.
The Hurwitz number 
conjecture of \cite{BM} 
was then solved in a series of papers by
one of the authors \cite{EMS,MZ}, using
the Lambert curve as  a \emph{given} input. 
Since conjecture is true, the Lambert curve
\eqref{Lambert} \emph{should be} the mirror
B-model for Hurwitz numbers. But why? 
In \cite{EMS,MZ}, we did not attempt to give any
explanation.

The emphasis of
our current paper is to prove  that the mirror dual object
is simply a consequence of the $\Mbar_{0,1}$ case of 
the edge-contraction operation on the original 
counting problem.
The situation is similar to 
several cases of Gromov-Witten theory,
where the mirror is constructed by the genus $0$
Gromov-Witten invariants themselves.

To illustrate the idea, let us consider the number $T_d$
of
connected  \emph{trees} consisting of \emph{labeled}
$d$ nodes (or vertices). The initial condition is 
$T_1 = 1$. The numbers satisfy a recursion relation
\be
\label{tree}
(d-1) T_d = \half  \sum_{\substack{a+b=d\\
a,b\ge 1}} ab \binom{d}{a}T_aT_b.
\ee
A tree of $d$ nodes has $d-1$ edges. The left-hand
side counts how many ways we can eliminate
an edge. When an edge is eliminated, the tree
breaks down into two disjoint pieces, one consisting
of $a$ labeled nodes, and the other $b=d-a$ 
labeled nodes.
The original tree is restored by connecting
one of the $a$ nodes on one side to
 one of the $b$ nodes on the other side. The
 equivalence of  counting in this elimination
 process  gives \eqref{tree}.
 From the initial value, the recursion
 formula generates the tree sequence
 $1,1,3,16,125,1296,\dots$. We note, however,
 that \eqref{tree} does not directly give a
 closed formula for $T_d$. To find one,
 we introduce a generating function, or
 a \textbf{spectral curve}
 \be
 \label{tree y}
 y = y(x)  := \sum_{d=1}^\infty \frac{T_d}{(d-1)!}
 x^d.
 \ee
 In terms of the generating 
 function,  \eqref{tree} becomes equivalent to 
 \be
 \label{tree diff}
 \left(x^2\circ \frac{d}{dx}\circ \frac{1}{x}
 \right) y = \half x\frac{d}{dx} y^2
 \Longleftrightarrow
 \frac{dx}{dy} = \frac{x(1-y)}{y}.
 \ee
 The initial condition is $y(0) = 0$ and $y'(0)=1$,
 which allows us to 
 solve the differential equation uniquely. Lo and behold,
 the solution is exactly
 \eqref{Lambert}.

  To find the formula for $T_d$, we need the 
 \emph{Lagrange Inversion Formula}.
 Suppose that $f(y)$ is a holomorphic function  
  defined near $y=0$, and 
 that $f(0)\ne 0$. Then the inverse
 function of
 $
 x = \frac{y}{f(y)}
 $
 near $x=0$ is given by
 \be
 \label{LIF}
 y = \sum_{k=1}^\infty \left.\left(\frac{d}{dy}
 \right)^{k-1}
 \big(f(y)^k\big)\right|_{y=0}\frac{x^k}{k!}.
 \ee
The proof is elementary and
requires only Cauchy's integration 
formula. Since $f(y)=e^y$ in our case, we immediately
obtain Cayley's formula $T_d = d^{d-2}$.

The point we wish to make here is that the real
problem behind the scene is not tree-counting, but
 \emph{simple Hurwitz numbers}.
This relation is understood by the correspondence
between trees and ramified coverings 
of $\bP^1$ by $\bP^1$ of degree $d$ that are
simply ramified except for one total ramification
point.
When we
look at
the dual graph of a tree,    elimination of an edge
becomes  contracting an edge, and 
this operation 
precisely gives a \emph{degeneration formula}
for counting problems on $\Mbar_{g,n}$. 
The base case for the counting problem
is $(g,n) = (0,1)$, and the recursion \eqref{tree} is
 the result of the edge-contraction operation 
for simple Hurwitz numbers 
associated with $\Mbar_{0,1}$. In this sense,
the Lambert curve \eqref{Lambert}
is the \emph{mirror dual}
 of  simple Hurwitz numbers.

The paper is organized as follows. 
In Section~\ref{sect:Hurwitz}, we 
present combinatorial graph enumeration
problems, and show that they are  equivalent
to  counting of
simple and orbifold Hurwitz numbers.
In Section~\ref{sect:spectral},
the spectral curves 
of the topological recursion for simple 
and orbifold Hurwitz numbers 
(the mirror objects to the counting problems)
are constructed
from the edge-contraction formulas for 
$(g,n) = (0,1)$ invariants.

\section{Orbifold Hurwitz numbers
as graph enumeration}
\label{sect:Hurwitz}

Mirror symmetry provides an effective tool
for counting problems of Gromov-Witten type
invariants. The question is how we construct 
the mirror, given a counting problem.
Although there is so far no general formalism, 
we present a systematic procedure for 
computing orbifold Hurwitz numbers in this 
 paper. The key observation is that
the edge-contraction operations for $(g,n)=(0,1)$
identify the mirror object.

The topological recursion for 
simple and orbifold Hurwitz numbers 
are derived as the Laplace transform of
the cut-and-join equation
\cite{BHLM, EMS, MZ}, where the spectral
curves are identified by the consideration of
mirror symmetry of toric Calabi-Yau
orbifolds \cite{BHLM,BM, FLZ,FLZ2}. In this section we give a
purely combinatorial graph enumeration 
problem that is equivalent to counting
orbifold Hurwitz numbers.
We then show in the next section
that the edge-contraction 
formula restricted to the $(g,n)=(0,1)$
case determines the spectral curve
and the differential forms $W_{0,1}$ and
$W_{0,2}$ of \cite{BHLM}. These
quantities form the mirror objects for the
orbifold Hurwitz numbers.

\subsection{Cell graphs}

To avoid unnecessary confusion, we use
the terminology  \emph{cell graphs}
 in this article, instead of more common
ribbon graphs. Ribbon graphs naturally
appear for encoding complex structures of
a topological surface 
(see for example, \cite{K1992, MP1998}).
Our purpose of using ribbon graphs are
for degeneration of stable curves, and we label
vertices, instead of \emph{faces}, of a ribbon 
graph.

\begin{Def}[Cell graphs]
A connected \textbf{cell graph} 
of topological type $(g,n)$ is the
$1$-skeleton of a cell-decomposition of a
connected
closed oriented surface of genus $g$ with 
$n$ labeled $0$-cells. We call a $0$-cell a 
\emph{vertex}, a $1$-cell an \emph{edge}, 
and a $2$-cell a \emph{face}, of the cell graph.
We denote by $\Gam_{g,n}$ the set of 
connected cell graphs of type $(g,n)$.
Each edge consists of two \textbf{half-edges}
connected at the midpoint of the edge.
\end{Def}

\begin{rem}
\begin{itemize}
\item
The \emph{dual} of a cell graph is
a ribbon graph, or Grothendieck's 
dessin d'enfant. We note that we label vertices
of a cell graph, which corresponds to 
face labeling of a ribbon graph.
Ribbon graphs are also called by different names,
such as
embedded graphs and maps.

\item We identify two cell graphs if there is a 
homeomorphism of the surfaces that brings 
one cell-decomposition to the other, 
keeping the labeling of $0$-cells. The only 
possible automorphisms of a cell graph
come from cyclic rotations of half-edges
at each vertex. 
\end{itemize}
\end{rem}

\begin{Def}[Directed cell graph]
A \textbf{directed cell graph} is a cell graph
for which an arrow is assigned to each edge.
An arrow is the same as an ordering of the 
two half-edges forming an edge.
The set of directed cell graphs of type
$(g,n)$ is denoted by $\vec{\Gam}_{g,n}$.
\end{Def}

\begin{rem}
A directed cell graph is a \emph{quiver}. Since
our graph is drawn on an oriented surface, 
a directed cell graph carries more information than
its underlying quiver structure. The tail vertex
of an arrowed edge is called the \emph{source},
and the head of the arrow the \emph{target}, in the
quiver language.
\end{rem}

An effective tool in graph enumeration is
edge-contraction operations. Often edge contraction
leads to an inductive formula for counting problems
of graphs.

\begin{Def}[Edge-contraction operations]
\label{def:ECO}
There are two types of 
 \textbf{edge-contraction operations} 
 applied to cell graphs. 
 \begin{itemize}

\item \textbf{ECO 1}: Suppose there is a
directed edge $\vec{E}=\overset{\lrar}{p_ip_i}$
in a cell graph $\gam\in \vec{\Gam}_{g,n}$,
connecting the tail vertex $p_i$ and the head
vertex $p_j$.
We  \emph{contract} $\vec{E}$ in $\gam$,
and put the two vertices $p_i$ and $p_j$ together.
We use $i$ for the label of this new vertex, and 
call it again $p_i$. 
Then we have a new cell graph
 $\gam'\in \vec{\Gam}_{g,n-1}$ with one less vertices.
 In this process, the topology of the surface on
 which $\gam$ is drawn does not change. Thus
 genus $g$ of the graph stays the same.

\begin{figure}[htb]
\includegraphics[height=1in]{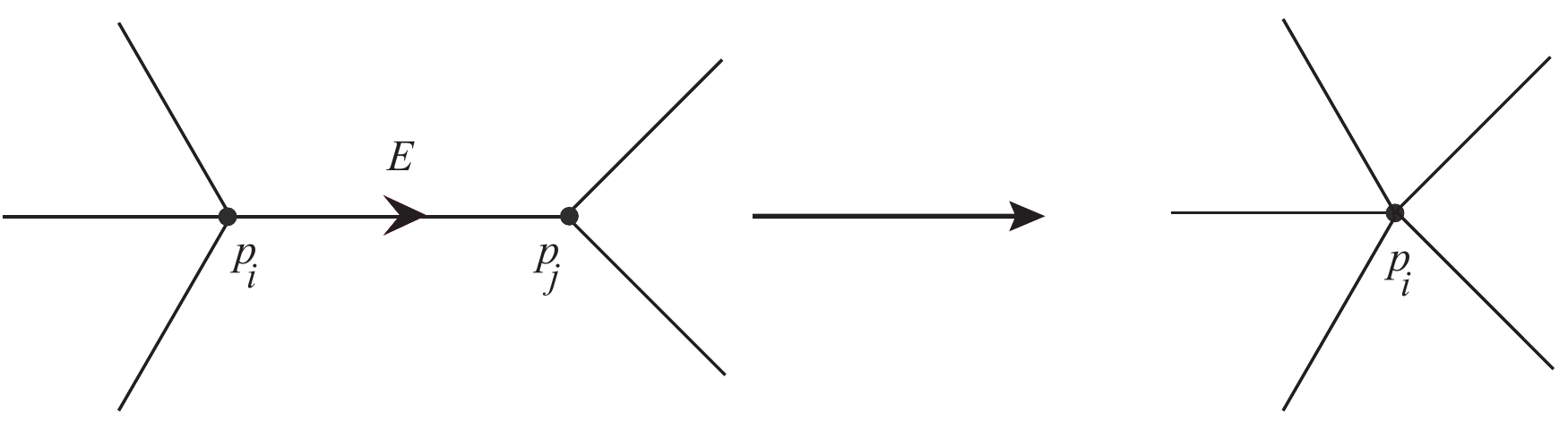}
\caption{Edge-contraction operation ECO 1.
The edge bounded by two vertices $p_i$ and $p_j$
is contracted to a single vertex $p_i$.
}
\label{fig:ECO 1}
\end{figure}

\item We use the notation $\vec{E}$ for the 
 edge-contraction operation 
 \be\label{ECO1}
 \vec{E}:\vec{\Gam}_{g,n}\owns \gam\longmapsto \gam'\in 
 \vec{\Gam}_{g,n-1}.
 \ee

\item \textbf{ECO 2}: Suppose there is a directed loop
$\vec{L}$ in $\gam\in\vec{\Gam}_{g,n}$ at the $i$-th vertex $p_i$.
Since a loop in the $1$-skeleton of
a cell decomposition is 
a topological cycle on the surface, its contraction 
inevitably changes the topology of the surface. 
First we look at the half-edges incident to 
vertex $p_i$. Locally around $p_i$ on the 
surface, the directed loop $\vec{L}$ 
separates the neighborhood of
$p_i$
into two pieces. Accordingly, we put 
the incident half-edges into  
two groups. We then break the vertex $p_i$
into two vertices, $p_{i_1}$ and $p_{i_2}$, so that 
one group of half-edges are incident to $p_{i_1}$,
and the other group to $p_{i_2}$. 
The order of two vertices 
is determined by placing the loop $\vec{L}$
\emph{upward} near at vertex $p_i$. 
Then we name the new vertex on its left by $p_{i_1}$,
and on its right by $p_{i_2}$.

Let $\gam'$ denote the possibly 
disconnected graph obtained by 
contracting $\vec{L}$ and separating the vertex
to two distinct vertices labeled by $i_1$ and $i_2$.

\begin{figure}[htb]
\includegraphics[height=1.1in]{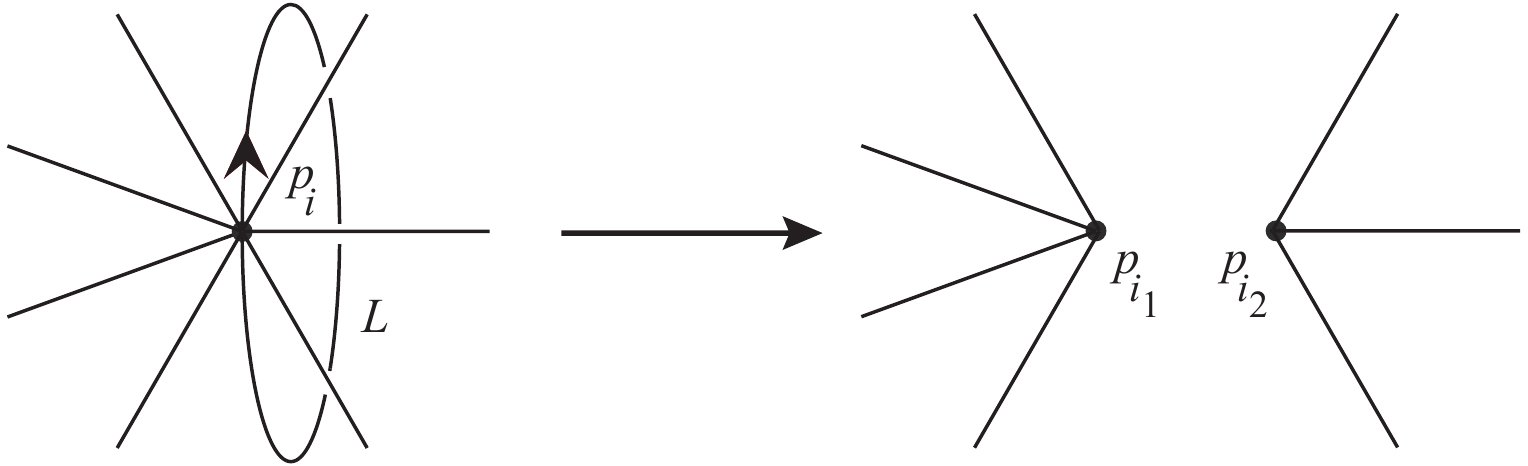}
\caption{Edge-contraction operation 
ECO 2. The contracted edge is a loop $\vec{L}$ of 
a cell graph. Place the loop 
so that it is upward near at $p_i$
to which 
$\vec{L}$ is attached. The vertex $p_i$  is 
then broken into two vertices,
$p_{i_1}$ on the left, and $p_{i_2}$ on 
the right. Half-edges 
incident to $p_i$ are separated into two groups,
belonging to two sides of the loop near $p_i$.
}
\label{fig:ECO2}
\end{figure}

\item
If $\gam'$ is connected, then it is in $\vec{\Gam}_{g-1,n+1}$.
The loop $\vec{L}$  is a \textit{loop of handle}.
We use the same notation $\vec{L}$ to indicate
the edge-contraction operation
\be\label{ECO2-1}
\vec{L}:\vec{\Gam}_{g,n}\owns \gam\longmapsto
\gam'\in \vec{\Gam}_{g-1,n+1}.
\ee

\item
If $\gam'$ is disconnected, then write
$\gam'=(\gam_1,\gam_2)\in \vec{\Gam}_{g_1,|I|+1}
\times \vec{\Gam}_{g_2,|J|+1}$, where 
\be\label{disconnected}
\begin{cases}
g=g_1+g_2\\
I\sqcup J = \{1,\dots,\widehat{i},\dots,n\}
\end{cases}.
\ee
The edge-contraction operation is again denoted 
by
\be\label{ECO2-2}
\vec{L}:\vec{\Gam}_{g,n}\owns \gam\longmapsto
(\gam_1,\gam_2)\in \vec{\Gam}_{g_1,|I|+1}\times
\vec{\Gam}_{g_2,|J|+1}.
\ee
In this case we  call $\vec{L}$ a \textit{separating loop}.
Here, vertices labeled by $I$ belong to the connected 
component of genus 
$g_1$, and those labeled by $J$ are on the other 
component of genus $g_2$. 
Let $(I_-,i,I_+)$ (reps. $(J_-,i,J_+)$) be the reordering of 
$I\sqcup \{i\}$ (resp. $J\sqcup \{i\}$)
in the increasing order. 
Although we give labeling $i_1,i_2$ to the
two vertices created by breaking $p_i$,
since they belong to distinct graphs, we can simply
use $i$ for the label of $p_{i_1}\in \gam_1$ and
the same $i$ for $p_{i_2}\in \gam_2$. 
The arrow of $\vec{L}$ translates
into the information of ordering among the 
two vertices $p_{i_1}$ and $p_{i_2}$. 
\end{itemize}
\end{Def}

\begin{rem}
The use of directed cell graphs enables us to 
define edge-contraction operations, keeping track
with vertex labeling. We refer to \cite{OM6}
for the actual motivation for quiver cell graphs. 
Since our main concern is enumeration of graphs,
the extra data of directed edges does not plan any role.
In what follows, we deal with cell graphs without
directed edges. The edge-contraction
operations are defined with a choice of direction, but
the counting formula we derive does not depend of this choice.
\end{rem}

\begin{rem}
Let us define $m(\gam)=2g-2+n$
for a graph $\gam\in \Gamma_{g,n}$. Then every
edge-contraction operation  reduces 
$m(\gam)$ exactly by $1$. 
Indeed, for ECO 1, we have 
$$
m(\gam') = 2g -2
+(n-1) = m(\gam)-1.
$$
The ECO 2 applied to a loop of handle produces
$$
m(\gam') = 2(g-1)-2+(n+1) = m(\gam)-1.
$$ 
For a separating loop, we have
$$
\begin{matrix}
&2g_1-2+|I|+1
\\
{+)}&{2g_2-2+|J|+1}
\\
&\overline{2g_1+2g_2-4+|I|+|J|+2}
&=\; \;2g-2+n-1.
\end{matrix}
$$
\end{rem}

\subsection{$r$-Hurwitz graphs}

We choose and fix a positive integer $r$. 
The decorated graphs we wish to enumerate
are the following.

\begin{Def}[$r$-Hurwitz graph]
An $r$-\textbf{Hurwitz graph} 
$(\gam,D)$ of type $(g,n,d)$ consists of the
following data.
\begin{itemize}

\item $\gam$ is a 
connected cell graph of type 
$(g,n)$, with $n$ labeled vertices.

\item $|D|=d$ is divisible by $r$, and
$\gam$ has $m=d/r$ unlabeled
faces and $s$ unlabeled edges, where
\be
\label{s}
s= 2g-2+\frac{d}{r} + n.
\ee

\item $D$ is a configuration of
$d=rm$ unlabeled dots on the graph subject to 
the following conditions:

\begin{enumerate}
\item The set of $d$ dots are grouped into
$m$ subsets of $r$ dots, each of which
 is equipped with 
a cyclic order.

\item Every face of $\gam$ has 
cyclically ordered $r$ dots.

\item These dots are clustered near 
vertices of the face. At each corner of the
face, say at Vertex $i$, the dots are ordered according
to the cyclic order that is consistent of 
the orientation of the face, which is chosen 
to be counter-clock wise.

\item  Let 
$\mu_i$ denote the total
number  of dots clustered at Vertex $i$. Then
$\mu_i>0$ for every $i=1,\dots,n$.
Thus we have an ordered partition 
\be
\label{ordered partition}
d = \mu_1+\cdots+\mu_n.
\ee

In particular,  the number of vertices ranges 
$0< n\le d$.

\item Suppose an edge $E$ connecting two distinct
vertices, say Vertex $i$ and $j$, bounds the same
face twice. Let $p$ be the midpoint of $E$. 
The polygon representing the face has $E$ twice
on its perimeter,
hence the point $p$ appears also twice. We name
them as $p$ and $p'$. Which one we call $p$ or $p'$
does not matter.
Consider a path on the perimeter of this polygon
starting from $p$ and ending up with 
$p'$ according to the  counter-clock
wise orientation.
 Let $r'$ be the total number of dots 
clustered around vertices of the face, 
counted along the path. Then it satisfies 
\be
\label{dot condition}
0<r'<r.
\ee
 For example,
not all $r$ dots of a face can be clustered 
at a vertex of degree $1$. 
In particular, for the case of $r=1$, 
the graph $\gam$ has no edges bounding
 the same face twice.

\end{enumerate}
\end{itemize}
An \textbf{arrowed} $r$-Hurwitz graph
$(\gam,\vec{D})$  has,
in addition to
to the above data $(\gam,D)$, an arrow 
assigned to  one of the $\mu_i$ dots
from Vertex $i$ for each index $1\le i \le n$.
\end{Def}

The counting problem we wish to study
is the number $\cH_{g,n}^r(\mu_1\dots,\mu_n)$
of arrowed $r$-Hurwitz graphs for a prescribed 
ordered partition \eqref{ordered partition},
counted with the automorphism weight.
The combinatorial data corresponds to 
an object in algebraic geometry. Let us first
identify what the $r$-Hurwitz graphs represent.
We denote by $\bP^1[r]$ the $1$-dimensional
orbifold modeled on $\bP^1$ that has
 one stacky point 
$\left[0 \big/\big(\bZ/(r)\big)\right]$
at $0\in \bP^1$.

\begin{ex} The base case is $\cH_{0,1}^r(r)=1$
(see Figure~\ref{fig:H01}). This counts
the identity morphism $\bP^1[r]
\overset{\sim}{\lrar} \bP^1[r]$.
\end{ex}

\begin{figure}[htb]
\centerline{\epsfig{file=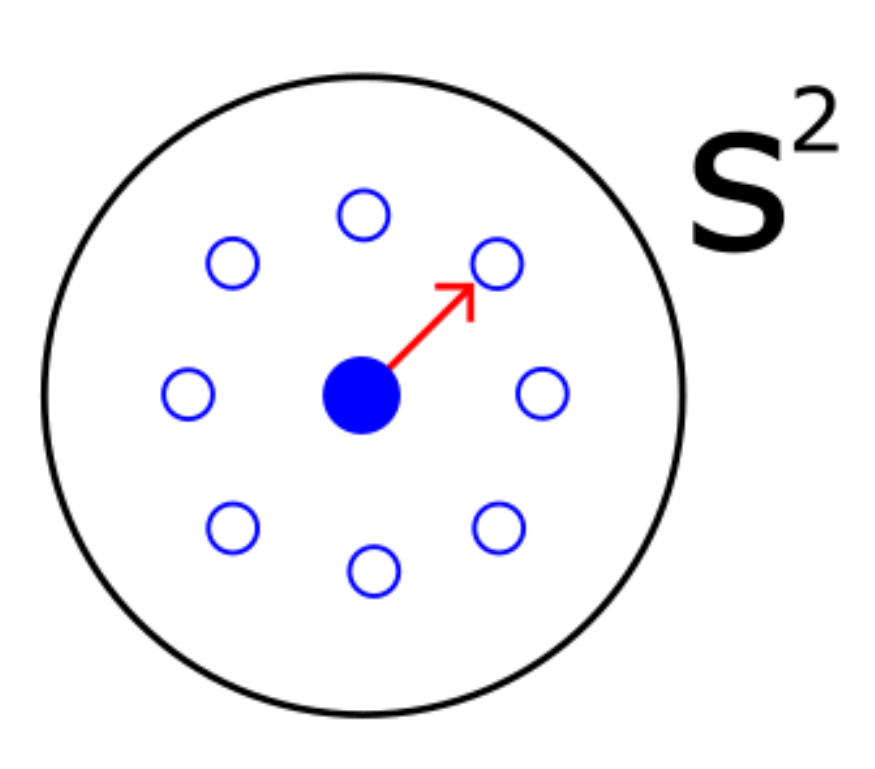, width=0.7in}}
\caption{The graph
has only one vertex and no edges. All $r$ dots
are clustered around this unique vertex, with
an arrow attached to one of them. 
Because of the arrow, there is no automorphism 
of this graph.
}
\label{fig:H01}
\end{figure}

\begin{Def}[Orbifold Hurwitz cover and Orbifold
Hurwitz numbers]
An \emph{orbifold Hurwitz cover} $f:C\lrar \bP^1[r]$ is
a morphism from an orbifold $C$ that is modeled
on a smooth algebraic curve of genus $g$
that has 
\begin{enumerate}
\item
 $m$ stacky points of the same type as
the one on the base curve that are all mapped to
$\left[0 \big/\big(\bZ/(r)\big)\right]\in\bP^1[r]$,
\item 
arbitrary profile $(\mu_1,\dots,\mu_n)$
with $n$ labeled points
over $\infty\in \bP^1[r]$, 
\item and all other
ramification points are simple.
\end{enumerate}
If we replace the target orbifold by $\bP^1$, then
the morphism is a regular map from a
smooth curve of genus $g$ with profile
$(\overset{m}{\overbrace{r,\dots,r})}$
over $0\in\bP^1$, labeled profile
$(\mu_1,\dots,\mu_n)$ over $\infty\in\bP^1$,
and a simple ramification at any other
 ramification point.
The Euler characteristic condition 
\eqref{s} of the
graph $\gam$ gives the number of simple
ramification points of $f$ 
through the Riemann-Hurwitz
formula.
The automorphism weighted count of
the number of the topological types of 
such covers is denoted by 
$H_{g,n}^r (\mu_1,\dots,\mu_n)$.
These numbers are referred to as \emph{orbifold
Hurwitz numbers}. When $r=1$, they
count the usual simple Hurwitz numbers.
\end{Def}

The counting of the topological types is the same
as counting actual orbifold Hurwitz covers
such that all simple ramification points
are mapped to one of  the $s$-th roots of unity
$\xi^1,\dots,\xi^s$, where $\xi = \exp(2\pi i/s)$,
if all simple ramification points of $f$ are
labeled. Indeed, such a labeling is given by
 elements of the cyclic group
$\{\xi^1,\dots,\xi^s\}$ of order $s$.
Let us construct an edge-labeled Hurwitz graph
from an orbifold Hurwitz cover with 
fixed branch points on the target as above.
We first review the case of $r=1$, i.e., 
the simple Hurwitz covers. Our graph is
essentially the same as the dual of the
\emph{branching graph} of \cite{OP}.

\subsection{Construction of $r$-Hurwitz 
graphs}

First we consider the case $r=1$.
Let $f:C\lrar \bP^1$ be a simple 
Hurwitz cover of genus $g$ and degree $d$
with labeled profile
$(\mu_i,\dots,\mu_n)$ over $\infty$,
unramified over $0\in\bP^1$, 
 and simply ramified over
$B=\{\xi^1,\dots,\xi^s\}
\subset \bP^1$, where $\xi = \exp(2\pi i/s)$
and $s=2g-2+d+n$.
We denote by $R = \{p_1,\dots,p_s\}\subset C$ the
labeled simple ramification points of $f$, that is
bijectively mapped to $B$
 by $f:R\lrar B$. We choose a labeling of
 $R$ so that $f(p_\a)= \xi^\a$ for every
 $\a=1,\dots,s$. 
 
 On  $\bP^1$, plot
 $B$ and connect each element $\xi^\a\in B$ with 
  $0$ by a straight line segment. 
 We also 
 connect $0$ and $\infty$ by a straight line
 $z=t \exp(\pi i/s)$, $0\le t\le \infty$.
 Let $*$ denote the configuration of 
 the $s$ line segments.
 The inverse image $f^{-1}(*)$ is a  
 cell graph on $C$, for which $f^{-1}(0)$
 forms the set of vertices. We remove all inverse images 
 $f^{-1}(\overline{0\xi^\a})$ 
 of the line segment $\overline{0\xi^\a}$ from this
 graph, except for the ones that end at one of the
 points $p_\a\in R$.
 Since $p_\a$ is a simple ramification point of
 $f$, the line segment ending at $p_\a$
 extends to another
 vertex, i.e., another
 point in $f^{-1}(0)$. We denote by 
 $\gam^{\vee}$ the graph after this removal
 of line segments. We define the edges
 of the graph to be the connected line
 segments at $p_\a$ for some $\a$. We use 
 $p_\a$ as the label of the edge.
 The graph $\gam^{\vee}$
 has $d$ vertices, $s$ edges,
 and $n$ faces.

 An inverse image of the line $\overline{0\infty}$
 is a  ray starting at a vertex of the graph 
 $\gam^{\vee}$ and ending up with one
 of the points in $f^{-1}(\infty)$, which is
 the center of a face.
 We place a dot on this line
 near at each vertex. The edges of
 $\gam^{\vee}$ incident to a vertex 
 are  cyclically ordered counter-clockwise, 
 following the natural cyclic order of $B$.
 Let $p_\a$ be an edge incident to a vertex,
 and $p_\b$ the next one 
 at the same vertex according to the 
 cyclic order. We denote by $d_{\a\b}$ the number
 of dots in the span of two edges $p_\a$ and $p_\b$,
 which is $0$ if $\a<\b$, and $1$ if $\b<\a$.
 Now we consider 
 the dual graph $\gam$ of 
 $\gam^{\vee}$. It  has $n$ vertices,
 $d$ faces, and $s$ edges still labeled by
 $\{p_1,\dots,p_s\}$. At the angled 
 corner between the two
 adjacent edges labeled by $p_\a$ and $p_\b$
 in this order according to the cyclic order, we
 place $d_{\a\b}$ dots. The 
 data $(\gam,D)$ consisting of the cell 
 graph $\gam$ and the dot configuration $D$ is 
 the Hurwitz graph corresponding to the
 simple Hurwitz cover $f:C\lrar \bP^1$ for $r=1$.

It is obvious that what we obtain is an $r=1$
Hurwitz graph, except for the condition (5)
of the configuration $D$, which requires an
explanation. The dual graph
$\gam^{\vee}$ for $r=1$ is 
the \emph{branching graph} of \cite{OP}. 
Since $|B|=s$ is the number of simple ramification
points, which is also the number of edges of 
$\gam^{\vee}$,
 the branching 
graph cannot have any loops. This is because
two distinct  powers of $\xi$ in the range of
$1,\dots,s$ cannot
be the same.
This fact reflects in
the condition that $\gam$ has no edge
that bounds the same face twice. 
This explains the condition (5) for $r=1$. 

\begin{rem}
If we consider the case $r=1, g=0$ and $n=1$, then
$s=d-1$. Hence the graph $\gam^\vee$
is a connected tree consisting of $d$ 
nodes (vertices) and $d-1$ \emph{labeled} edges.
Except for $d=1,2$, every vertex is uniquely labeled 
 by incident edges. The tree counting of
 Introduction is relevant to Hurwitz numbers in this
 way.
\end{rem}

Now let us consider
  an orbifold 
Hurwitz cover $f:C\lrar \bP^1[r]$
of genus $g$ and degree $d=rm$
with labeled profile
$(\mu_i,\dots,\mu_n)$ over $\infty$, 
$m$ isomorphic  stacky points over
$\left[0 \big/\big(\bZ/(r)\big)\right]\in\bP^1[r]$,
 and simply ramified over
$B=\{\xi^1,\dots,\xi^s\}
\subset \bP^1[r]$, where  $s=2g-2+m+n$.
By $R = \{p_1,\dots,p_s\}\subset C$ we indicate the
labeled simple ramification points of $f$, that is
again bijectively mapped to $B$
 by $f:R\lrar B$. We choose the same labeling of
 $R$ so that $f(p_\a)= \xi^\a$ for every
 $\a=1,\dots,s$. 
 
 On  $\bP^1[r]$, plot
 $B$ and connect each element $\xi^\a\in B$ with 
 the stacky point at $0$ by a straight line segment. 
 We also 
 connect $0$ and $\infty$ by a straight line
 $z=t \exp(\pi i/s)$, $0\le t\le \infty$, as before.
 Let $*$ denote the configuration of 
 the $s$ line segments.
 The inverse image $f^{-1}(*)$ is a  
 cell graph on $C$, for which $f^{-1}(0)$
 forms the set of vertices. We remove all inverse images 
 $f^{-1}(\overline{0\xi^\a})$ 
 of the line segment $\overline{0\xi^\a}$ from this
 graph, except for the ones that end at one of the
 points $p_\a\in R$.
 We denote by 
 $\gam^{\vee}$ the graph after this removal
 of line segments. We define the edges
 of the graph to be the connected line
 segments at $p_\a$ for some $\a$. We use 
 $p_\a$ as the label of the edge.
 The graph $\gam^{\vee}$
 has $m$ vertices, $s$ edges.

 The inverse image of the line $\overline{0\infty}$
 form a set of $r$ rays at each vertex of the graph 
 $\gam^{\vee}$, connecting
 $m$ vertices and 
 $n$ centers $f^{-1}(\infty)$ of faces. 
 We place a dot on each line
 near at each vertex. These dots are cyclically ordered
 according to the orientation of $C$, which we
 choose to be counter-clock wise. The edges of
 $\gam^{\vee}$ incident to a vertex 
 are also cyclically ordered in the same way.
 Let $p_\a$ be an edge incident to this vertex,
 and $p_\b$ the next one according to the 
 cyclic order. We denote by $d_{\a\b}$ the number
 of dots in the span of two edges $p_\a$ and $p_\b$.
 Let $\gam$ denote the dual graph of 
 $\gam^{\vee}$. It now has $n$ vertices,
 $m$ faces, and $s$ edges still labeled by
 $\{p_1,\dots,p_s\}$. At the angled 
 corner between the two
 adjacent edges labeled by $p_\a$ and $p_\b$
 in this order according to the cyclic order, we
 place $d_{\a\b}$ dots, again cyclically ordered
 as on $\gam^{\vee}$. The 
 data $(\gam,D)$ consisting of the cell 
 graph $\gam$ and the dot configuration $D$ is 
 the $r$-Hurwitz graph corresponding to the
 orbifold Hurwitz cover $f:C\lrar \bP^1[r]$.

We note that $\gam^\vee$ can have loops, 
unlike the case of $r=1$. Let us place
$\gam^\vee$ locally on an oriented plane around
a vertex. The plane is locally separated into $r$ sectors
by the $r$ rays $f^{-1}(\overline{0\infty})$
at this vertex. There are $s$ half-edges coming
out of the vertex at each of these $r$ sectors.
A half-edge corresponding to $\xi^\a$ cannot be 
connected to another half-edge corresponding
to $\xi^\b$ in the same sector, by the 
same reason for the case of $r=1$. But it
can be connected
to another half-edge of a different sector 
corresponding again to the same $\xi^\a$. In this case, 
within the loop there are some dots, representing
the rays of $f^{-1}(\overline{0\infty})$ in between
these half-edges. The total number of dots in the
loop cannot be $r$, because then the half-edges
being connected are in the same sector. 
Thus the condition (5) is satisfied.

\begin{ex}
Theorem~\ref{thm:ECF} below shows
that 
$$
\cH_{0,2}^2(3,1) = \frac{9}{2}.
$$
This is the weighted count of the number of
$2$-Hurwitz graphs of type $(g,n,d) =(0,2,4)$ with an
ordered partition $4 = 3+1$.

\begin{figure}[htb]
\label{fig:H02}
\includegraphics[width=2in]{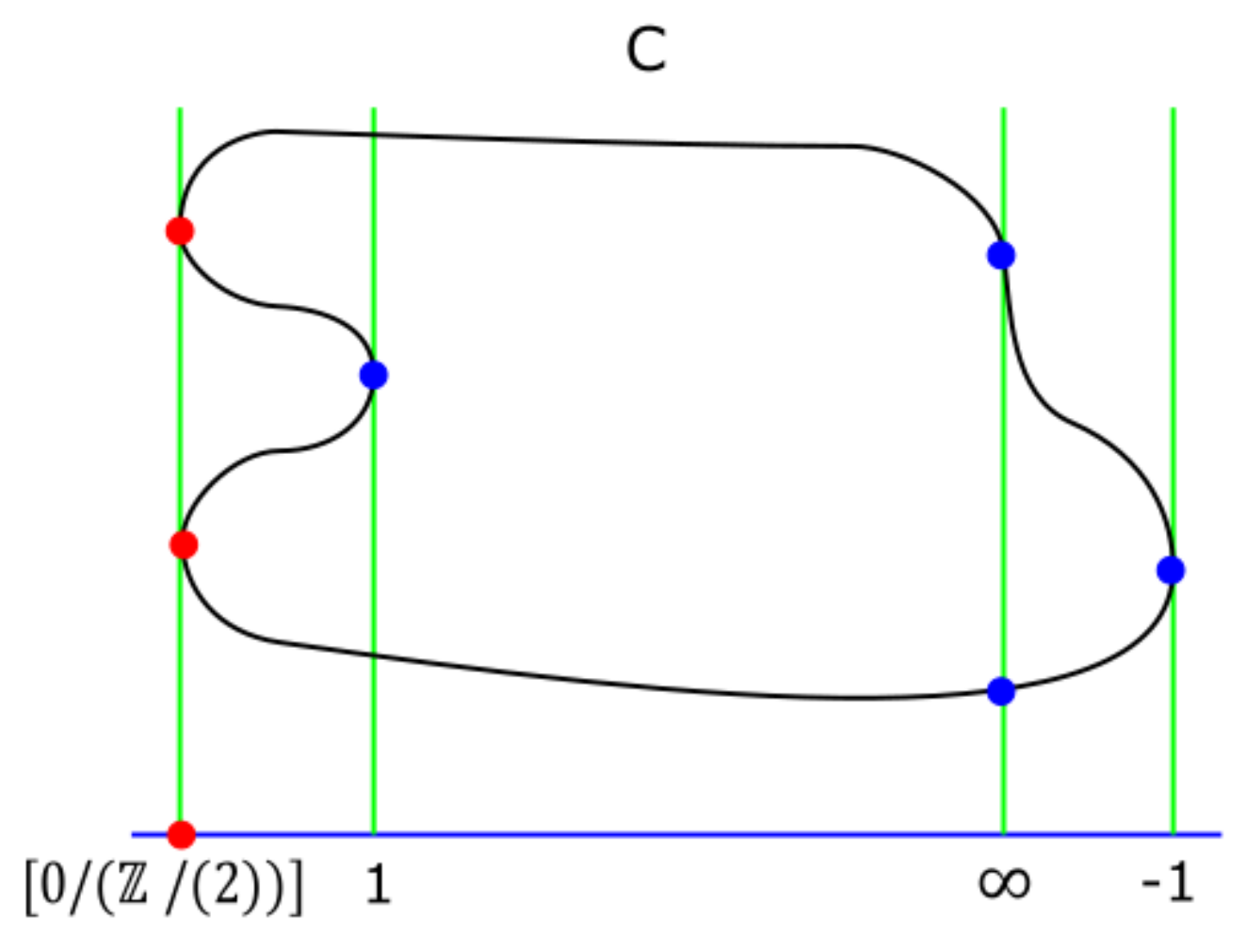}
\caption{Hurwitz covers counted in 
$\cH_{0,2}^2(3,1)$ have two orbifolds points,
two simple ramification points, and one 
ramification point of degree $3$.
}
\end{figure}

\begin{figure}[htb]
\includegraphics[height=0.6in]{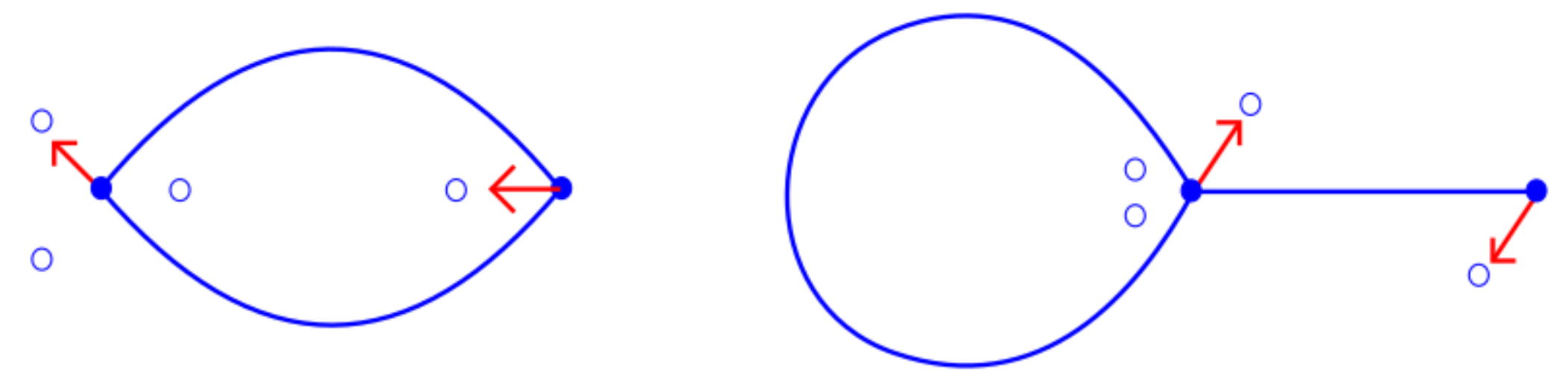}
\caption{There are two $2$-Hurwitz graphs.
The number of graphs is $3/2$ for the graph on 
the left counting the automorphism, 
and $3$ for the one on the right. The total
is thus $9/2$.}
\label{fig:H02graph}
\end{figure}

In terms of formulas, the $2$-Hurwitz cover
corresponding to the graph on the left of 
Figure~\ref{fig:H02graph} is given by
$$
f(x) = \frac{(x-1)^2(x+1)^2}{x}.
$$
To make the simple ramification points
sit on $\pm 1$, we need to divide $f(x)$ by
$f(i/\sqrt{3})$, where $x=\pm 1/\sqrt{3}$ are  
 the simple ramification points. 
 The $2$-Hurwitz cover corresponding to
 the graph on the right of 
 Figure \ref{fig:H02graph} is given by
 $$
f(x) = \frac{(x-1)^2(x+1)^2}{x-a},
$$
where $a$ is a real number
satisfying  $|a|>\sqrt{3}/2$. The real parameter $a$
changes the topological type of the $2$-Hurwitz
cover. For $-\frac{\sqrt{3}}{2}<a<\frac{\sqrt{3}}{2}$,
the graph is the same as on the left, and for
$|a|>\frac{\sqrt{3}}{2}$, the graph becomes
the one on the right.
\end{ex}

\subsection{The edge-contraction formulas}

\begin{Def}[Edge-contraction operations]
The edge-contraction operations (ECOs) on an
arrowed $r$-Hurwitz graph 
$(\gam,\vec{D})$ are the following procedures.
Choose an edge $E$ of the cell graph $\gam$.
\begin{itemize}

\item \textbf{ECO 1}: 
We consider the case that
$E$ is an edge connecting two distinct
vertices Vertex $i$ and Vertex $j$. We can assume
$i<j$, which  induces a direction
$i\overset{E}{\lrar} j$ on $E$. Let us denote
by $F_+$ and $F_-$ the faces bounded
by $E$, where $F_+$ is on the left side of $E$
with respect to the direction. We now contract 
$E$, with the following additional 
operations:
\begin{enumerate}
\item Remove the original arrows at Vertices $i$ and
$j$.
\item Put the dots on $F_\pm$ clustered at
Vertices $i$ and $j$ together, keeping the cyclic
order of the dots on each of $F_\pm$.
\item Place a new arrow to the largest dot
on the corner at Vertex $i$ of Face $F_+$
with respect to the cyclic order.
\item If there are no dots on this particular corner,
then place an arrow to the first 
dot we encounter according to the counter-clock wise 
rotation from $E$ and centered at Vertex $i$.
\end{enumerate}

\end{itemize}
The new arrow at the joined vertex allows us
to recover the original graph from the new one.

\begin{figure}[htb]
\includegraphics[height=0.9in]{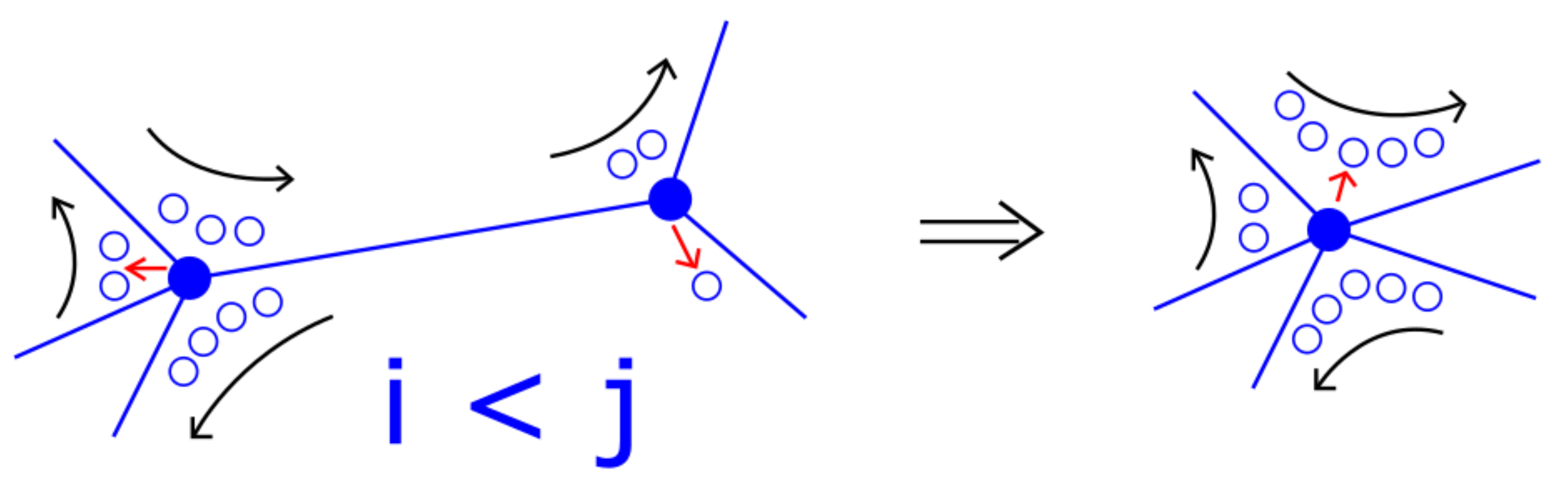}
\caption{After 
contracting the edge, a new arrow is placed
on the dot that is the largest
(according to the cyclic order) around Vertex $i$
in the original graph,
and on the face incident to $E$ which is 
on the left of $E$ with respect to the 
direction $i\rar j$. The new arrow tells us 
where the break is made in the original 
graph.
If there are no dots on this particular face,
then we go around Vertex $i$ counter-clock
wise and find the first dot in the original graph.
We place an arrow to this dot in the new
graph after contracting $E$. Here again the
purpose is to identify which of the $\mu_i$ dots
come from the original Vertex $i$
}
\label{fig:ECO1}
\end{figure}

\begin{itemize}

\item \textbf{ECO 2}: This time $E$ is a loop 
incident to Vertex $i$ twice. We contract 
$E$ and separate the vertex into two new ones,
as in ECA 3 of Definition~\ref{def:ECA}.
The additional operations are:
\begin{enumerate}
\item The contraction of a loop does not change
the number of faces. Separate the dots clustered at 
Vertex $i$ according to the original 
configuration.
\item Look at the new vertex to which the
original arrow is placed. We keep the same
name  $i$ to this vertex. The other vertex is named
$i'$.

\item Place a new arrow to the  dot 
on the corner at the new Vertex $i$
that was the largest in the original corner
with respect to the cyclic order.
\item If there are no dots on this particular corner,
then place an arrow to the first 
dot we encounter according to the counter-clock wise 
rotation from $E$ and centered at Vertex $i$
on the side of the old arrow.

\item We do the same operation for the new
Vertex $i'$, and put a new arrow to a dot.

\item Now remove the original arrow.
\end{enumerate}

\end{itemize}
\begin{figure}[htb]
\includegraphics[height=0.9in]{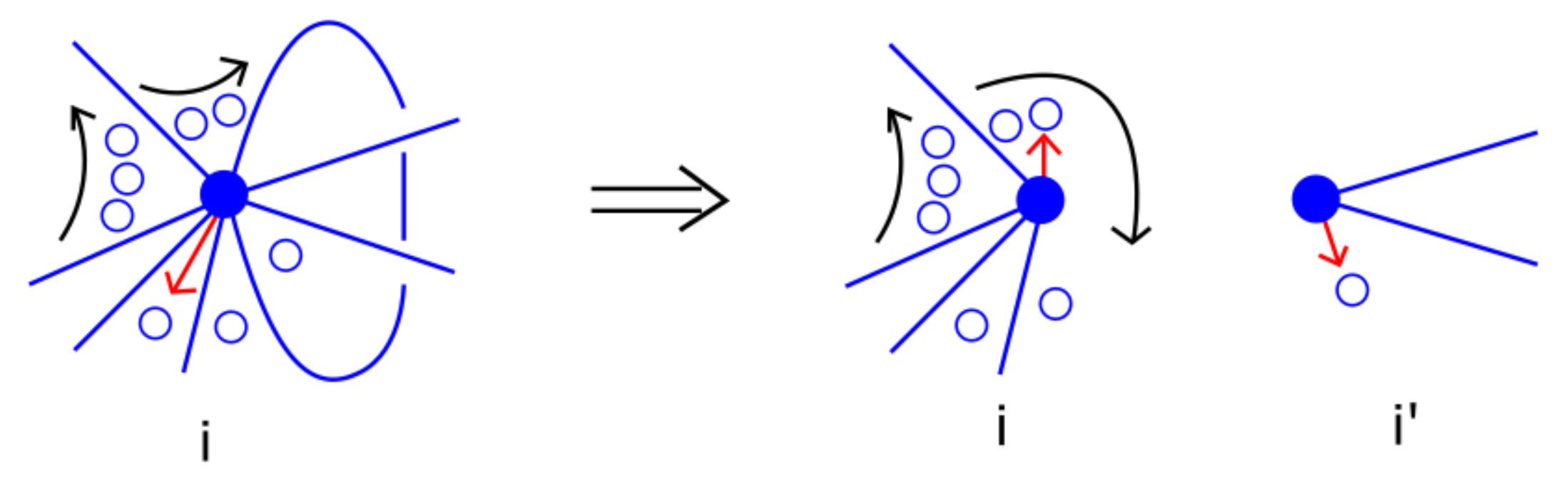}
\caption{New arrows are placed so that
the original graph can be recovered from the
new one
}
\label{fig:ECO2}
\end{figure}

\end{Def}

Although cumbersome, it is easy to show that

\begin{lem}
The edge-contraction operations preserve
the set of $r$-Hurwitz graphs.
\end{lem}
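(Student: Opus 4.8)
The plan is to verify, case by case, that each of the two edge-contraction operations sends an arrowed $r$-Hurwitz graph to another arrowed $r$-Hurwitz graph. Since the defining data of an $r$-Hurwitz graph $(\gam,\vec D)$ of type $(g,n,d)$ consists of (a) the underlying cell graph and its topological type, (b) the numerical constraints \eqref{s} relating $s$, $g$, $d/r$, and $n$, (c) the dot configuration $D$ with its five conditions, and (d) a single arrow at each vertex, I would check that all of these are preserved. The bookkeeping that ECO~1 and ECO~2 operate on cell graphs exactly as in Definition~\ref{def:ECO} is already in place, so the content of the lemma is entirely in the dot/arrow data and the constraint \eqref{s}.

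First I would dispose of the topological and numerical bookkeeping. By the Remark following Definition~\ref{def:ECO}, every edge-contraction reduces $m(\gam)=2g-2+n$ by exactly $1$, and $d$ (hence $m=d/r$) is unchanged: ECO~1 deletes no faces, and ECO~2 deletes no faces (as stated explicitly in its item~(1)). So I would simply substitute the new $(g,n)$ into \eqref{s} and confirm that the number $s$ of edges drops by $1$, matching the fact that one edge has been contracted. Concretely, for ECO~1 the new type is $(g,n-1)$ with the same $d/r$, giving $s'=2g-2+d/r+(n-1)=s-1$; for a loop of handle, $(g-1,n+1)$ gives the same arithmetic; and for a separating loop the additivity computation in that Remark yields the same total drop. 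This shows \eqref{s} is preserved and that the vertex bound $0<n\le d$ survives, provided no $\mu_i$ becomes zero — which is guaranteed because ECO~1 \emph{merges} two positive clusters (so $\mu_i'=\mu_i+\mu_j>0$) and ECO~2 \emph{splits} one cluster into two, each of which inherits at least one dot, using exactly the condition \eqref{dot condition} that forbids all $r$ dots of a face from piling at one corner.

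Next I would verify the five conditions on the dot configuration $D$. Conditions~(1)–(3) — grouping into $m$ cyclically ordered $r$-tuples, one $r$-tuple per face, and clustering at corners consistent with the counter-clockwise orientation — are preserved because the operations only \emph{regroup} dots among corners of faces that are identified or split, while keeping the cyclic order on each face, as spelled out in item~(2) of ECO~1 and item~(1) of ECO~2. Condition~(4), positivity of each $\mu_i$, is the point already handled above. The genuinely delicate one is condition~(5), the inequality \eqref{dot condition} for edges bounding the same face twice: contracting an edge can create \emph{new} such configurations (two faces of the old graph that were distinct may coincide after the vertices merge, or a loop contraction may bring two boundary visits of a face together), and I would have to check that the count $r'$ of dots along each relevant boundary path still satisfies $0<r'<r$. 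I expect this to be the main obstacle: one must argue that the regrouping prescribed in the ECO definitions never produces a degenerate boundary arc carrying $0$ or all $r$ dots. The cleanest route is to trace through the local picture near the contracted edge exactly as in the construction of $\gam^\vee$ from an orbifold Hurwitz cover, where \eqref{dot condition} was shown to follow from the sector structure of the $r$ rays of $f^{-1}(\overline{0\infty})$; since the ECOs are modeled on the geometry of that cover, the sector constraint is inherited.

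Finally, the arrow data must define a \emph{single} arrow at each vertex of the resulting graph, and this is where the explicit rules (3)–(4) of ECO~1 and (2)–(6) of ECO~2 do the work. I would observe that each rule places exactly one arrow per surviving vertex: ECO~1 removes the two old arrows and installs one new arrow at the merged vertex $p_i$ via the ``largest dot on the $F_+$ corner, else first dot counter-clockwise'' prescription, and ECO~2 removes the old arrow and installs one arrow on each of the two new vertices $i,i'$ by the analogous rule on each side of the loop. The reason these prescriptions are well defined is precisely condition~(4) (each corner-cluster where we start the counter-clockwise search belongs to a vertex with $\mu_i>0$, so the search terminates on a genuine dot). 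I would close by remarking, as the paper does, that invertibility — the new arrow records ``where the break was made'' — is what makes these arrowed graphs the correct objects, but invertibility is not needed for the lemma itself; only the membership in the set of $r$-Hurwitz graphs is. Assembling cases (a)–(d) then completes the proof.
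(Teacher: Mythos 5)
The paper offers no proof of this lemma at all (it appears right after the phrase ``Although cumbersome, it is easy to show that''), so the only question is whether your verification is sound. It is not, precisely at the step you treat most lightly: the claim that under ECO~2 each of the two new vertices ``inherits at least one dot, using exactly the condition \eqref{dot condition}.'' Condition~(5) constrains the dots of a \emph{face} along the two boundary paths cut out by an edge \emph{connecting two distinct vertices} that bounds that face twice; it says nothing about how the $\mu_i$ dots at a \emph{vertex} $p_i$ are distributed over the two sides of a \emph{loop} at $p_i$, which is what ECO~2 needs. The two statements are dual to each other, not the same, and the needed positivity does not follow from conditions (1)--(5) at all. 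Take $r=1$ and the following cell graph on the sphere: a vertex $p_1$ carrying a loop $L$; inside $L$, a vertex $p_2$ joined to $p_1$ by two edges $E_1,E_2$. The faces are the bigon $A$ between $E_1$ and $E_2$, the face $B$ between the bigon and $L$, and the face $C$ outside $L$. Place the single dot of $A$ at its corner at $p_2$, the dot of $B$ at its corner at $p_2$, and the dot of $C$ at its corner at $p_1$, so that $(\mu_1,\mu_2)=(1,2)$, $d=3$, and $s=3=2g-2+d/r+n$. Conditions (1)--(4) hold, and (5) holds vacuously: $E_1$ and $E_2$ each bound the two \emph{distinct} faces $A$ and $B$, while $L$ is a loop and hence outside the scope of (5). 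Yet every dot at $p_1$ lies outside $L$, so ECO~2 applied to $L$ produces a vertex with $\mu=0$, violating condition (4). Thus either such configurations must be excluded from the definition (they are not realized by covers: cover-derived graphs satisfy the extra property that both sides of any loop carry a dot, by the sector/monodromy argument), or the lemma as literally stated fails; in either case no argument quoting only (1)--(5) can close this step.

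A second, related gap: you assert that new ``bounds the same face twice'' configurations can arise because ``two faces of the old graph that were distinct may coincide after the vertices merge.'' That mechanism never occurs: under both ECOs the faces of $\gamma'$ are in natural bijection with those of $\gamma$ (the paper notes this explicitly for ECO~2, and it is immediate for ECO~1), so distinct faces stay distinct and the dot counts along the relevant boundary paths are literally unchanged --- this part is easier than you suggest. The mechanism you missed is that ECO~2 turns a loop at $p_i$ \emph{other than} the contracted one into an edge joining the two distinct vertices $p_{i_1}$ and $p_{i_2}$; if that loop bounded the same face twice, condition (5) applies to it \emph{after} the contraction but imposed nothing \emph{before}, since the definition restricts (5) to edges with distinct endpoints. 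So the listed axioms are not closed under the ECOs, and your appeal to ``the sector constraint is inherited'' from the cover picture is circular unless you first prove that every $r$-Hurwitz graph in the sense of the definition arises from a cover and that the ECOs correspond to degenerations of covers --- neither of which is established in the paper or in your proposal. A correct proof must either enlarge the definition by the dual dot conditions for loops (on both the vertex side and the face side) and check that this enlarged system is ECO-stable, or else work throughout with cover-derived graphs via the monodromy argument.
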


An application of the edge-contraction operations
is the following counting recursion formula.

\begin{thm}[Edge-Contraction Formula]
\label{thm:ECF}
The number of arrowed Hurwitz graphs
satisfy the following edge-contraction formula.
\be
\label{ECF}
\begin{aligned}
 &\left(2g-2+\frac{d}{r}+ n\right)
 \cH_{g,n}^r(\mu_1\dots,\mu_n) 
\\
&\qquad
=
\sum_{i< j}\mu_i\mu_j
\cH_{g,n-1}^r (\mu_1,\dots,\mu_{i-1},
\mu_i+\mu_j,\mu_{i+1},\dots,
\widehat{\mu_j},\dots,\mu_n)
\\
&\qquad+
\half \sum_{i=1}^n
\mu_i \sum_{\substack{\a+\b=\mu_i\\
\a, \b\ge 1}}
\left[\cH_{g-1,n+1}^r (\a,\b,\mu_1,\dots,
\widehat{\mu_i},\dots,\mu_n)
\phantom{\sum_{\substack{g_1+g_2=g\\
I\sqcup J = \{1,\dots,\hat{i},\dots,n\}}}}
\right.
\\
&\qquad+
\left.
\sum_{\substack{g_1+g_2=g\\
I\sqcup J = \{1,\dots,\hat{i},\dots,n\}}}
\cH_{g_1,|I|+1}^r (\a,\mu_I)
\cH_{g_2,|J|+1}^r (\b,\mu_J)
\right].
\end{aligned}
\ee
Here, $\widehat{\;\;}$ indicates the omission
of the index, and $\mu_I = (\mu_i)_{i\in I}$
for any subset $I\subset\{1,2,\dots,n\}$.
\end{thm}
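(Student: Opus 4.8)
The plan is to prove \eqref{ECF} by a weighted double count of \emph{marked-edge} arrowed $r$-Hurwitz graphs. Since an $r$-Hurwitz graph of type $(g,n,d)$ has exactly $s=2g-2+\frac{d}{r}+n$ edges by \eqref{s}, the left-hand side $\left(2g-2+\frac{d}{r}+n\right)\cH_{g,n}^r(\mu_1,\dots,\mu_n)$ is the automorphism-weighted number of pairs $\big((\gam,\vec D),E\big)$, where $(\gam,\vec D)$ is an arrowed $r$-Hurwitz graph realizing the ordered partition $(\mu_1,\dots,\mu_n)$ and $E$ is one of its edges; by orbit--stabilizer this equals $\sum 1/|\Aut(\gam,\vec D,E)|$ over isomorphism classes of such marked-edge graphs. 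Using the Lemma, every chosen edge is contractible, and the edges split into three mutually exclusive types: (i) $E$ joins two distinct vertices (\textbf{ECO 1}); (ii) $E$ is a non-separating loop (\textbf{ECO 2}, loop of handle); (iii) $E$ is a separating loop (\textbf{ECO 2}, separating). The proof then reduces to showing that contraction, read together with the positions of the removed arrows, is a weight-preserving bijection on each type onto the configurations counted by the matching term on the right.

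For type (i) I would contract $E$ between Vertex $i$ and Vertex $j$ with $i<j$, producing a graph in $\vec{\Gam}_{g,n-1}$ whose merged vertex carries $\mu_i+\mu_j$ dots, so the target partition is $(\dots,\mu_i+\mu_j,\dots)$ with $\mu_j$ omitted; the index bookkeeping of the Remark ($m$ drops by one) guarantees the topological type. The rules of \textbf{ECO 1} place a \emph{canonical} new arrow recording where the break was, so contraction is well defined independently of the two old arrows. To invert, one reinserts the edge at the location dictated by this canonical arrow and restores the two original arrows; the only lost data is the choice of original arrow among the $\mu_i$ dots at Vertex $i$ and among the $\mu_j$ dots at Vertex $j$, giving exactly $\mu_i\mu_j$ preimages over each arrowed $(g,n-1)$-graph. (This $\mu_i\mu_j$ is the graph-theoretic shadow of the number of transpositions joining two cycles of lengths $\mu_i,\mu_j$ in the cut-and-join picture, a useful consistency check.) Summing over the unordered pairs $i<j$ yields the first term of \eqref{ECF}.

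For types (ii) and (iii) I would contract the loop $E$ at Vertex $i$, which by \textbf{ECO 2} separates its incident half-edges into two groups, splits the vertex, and distributes the $\mu_i$ dots as $\a+\b=\mu_i$ with $\a,\b\ge1$; the inner sum over $\a,\b$ records the possible splittings. When the result is connected it lies in $\vec{\Gam}_{g-1,n+1}$ and contributes $\cH_{g-1,n+1}^r(\a,\b,\mu_1,\dots,\widehat{\mu_i},\dots,\mu_n)$; when disconnected it lies in $\vec{\Gam}_{g_1,|I|+1}\times\vec{\Gam}_{g_2,|J|+1}$ with $g=g_1+g_2$ and $I\sqcup J=\{1,\dots,\widehat{i},\dots,n\}$, contributing $\cH_{g_1,|I|+1}^r(\a,\mu_I)\,\cH_{g_2,|J|+1}^r(\b,\mu_J)$, again with $m$ decreasing by one. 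In both subcases \textbf{ECO 2} installs two canonical arrows at the new vertices and deletes the old one, so inversion requires only restoring the single original arrow among the $\mu_i$ dots of the re-merged vertex: $\mu_i$ choices, the coefficient $\mu_i$. Finally, a loop carries no intrinsic orientation while contraction is defined after choosing one; the two choices exchange the ordered data $(\a,\b)$ and $(g_1,I)\leftrightarrow(g_2,J)$, so summing over \emph{ordered} splittings double counts and forces the prefactor $\half$. Assembling (ii) and (iii) gives the bracketed sum of \eqref{ECF}.

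The main obstacle is the careful verification that contraction is a genuine weight-preserving bijection on each type, that is, that the canonical arrow rules make the operation well defined and exactly invertible in the degenerate configurations: corners carrying no dots, where rules (3)--(4) send the arrow to the first dot encountered counter-clockwise; vertices of degree one; and loops, where one must check that the defining inequality \eqref{dot condition}, $0<r'<r$, is preserved so that no illegal $r$-Hurwitz graph is created or missed. Equally delicate is matching automorphism weights: one must show the arrows rigidify the graphs enough that $|\Aut(\gam,\vec D,E)|$ on the left agrees with the product of automorphism orders of the contracted graphs on the right, so the fibers have the stated sizes $\mu_i\mu_j$ and $\mu_i$ with the factor $\half$, rather than merely matching up to automorphism factors. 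Once these well-definedness and weight-matching statements are in place, collecting the three types reproduces \eqref{ECF} termwise.
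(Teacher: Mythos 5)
Your proposal follows essentially the same argument as the paper's own proof: the left-hand side counts pairs of an arrowed graph with a chosen edge, the edges are split into the three types (connecting edge, loop of handle, separating loop) handled by ECO~1 and ECO~2, the factors $\mu_i\mu_j$ and $\mu_i$ arise from the removed arrows, and the $\half$ from the $\a\leftrightarrow\b$ symmetry. Your write-up is in fact more explicit than the paper's about the recoverability of the original graph from the canonical arrows and about matching automorphism weights, which the paper leaves implicit in the design of the edge-contraction operations and the unproved lemma preceding the theorem.
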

\begin{rem}
The edge-contraction formula (ECF) is a
recursion with respect to the number of edges
$$
s = 2g-2+\frac{\mu_1+\cdots+\mu_n}{r} +n.
$$
Therefore, it  calculates all values  of 
$\cH_{g,n}^r(\mu_1\dots,\mu_n)$
from the base case $\cH_{0,1}^r(r)$.
However,
it does not determine the initial value itself,
since $s=0$. We also note that the recursion 
is not for
$\cH_{g,n}^r$ as a function in $n$ integer
variables.
\end{rem}

\begin{proof}
The counting is done by applying the edge-contraction
operations. The left-hand side of
\eqref{ECF} shows the choice
of an edge, say $E$, out of 
$
s = 2g-2+\frac{d}{r}+n
$
 edges.
The first line of the right-hand side
corresponds to the case that the chosen edge $E$
connects Vertex $i$ and Vertex $j$. 
We  assume $i<j$, and apply ECO~1. The factor 
$\mu_i\mu_j$ indicates the removal of 
two arrows at these vertices
(Figure~\ref{fig:ECO1}).

When the edge $E$ we have chosen is a loop 
incident to Vertex $i$ twice, then we
apply ECO~2. The factor $\mu_i$ is the removal 
of the original arrow (Figure~\ref{fig:ECO2}).
The second and third lines on the right-hand side
correspond whether $E$ is a handle-cutting
loop, or a separation loop.
The factor $\half$ is there because of the 
symmetry between $\a$ and $\b$ of the partition
of $\mu_i$.
This complete the proof.
\end{proof}

\begin{thm}[Graph enumeration and orbifold
Hurwitz numbers]
The graph enumeration and counting orbifold
Hurwitz number are related by the following 
formula:
\be
\label{equivalence}
\cH_{g,n}^r(\mu_i,\dots,\mu_n)
= \mu_1\mu_2\cdots\mu_n 
H_{g,n}^r(\mu_i,\dots,\mu_n).
\ee
\end{thm}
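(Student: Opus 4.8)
The plan is to factor the arrowed count into two independent pieces: a bijection between topological types of orbifold Hurwitz covers and (unarrowed) $r$-Hurwitz graphs, and a combinatorial counting of the arrow placements that produces the prefactor $\mu_1\cdots\mu_n$. Writing the automorphism-weighted counts as
\be
\cH_{g,n}^r(\mu_1,\dots,\mu_n) = \sum_{(\gam,\vec D)}\frac{1}{|\Aut(\gam,\vec D)|},
\qquad
H_{g,n}^r(\mu_1,\dots,\mu_n) = \sum_{[f]}\frac{1}{|\Aut(f)|},
\ee
where the first sum is over isomorphism classes of arrowed $r$-Hurwitz graphs and the second over topological types of covers $f\colon C\to\bP^1[r]$, the goal is to show that summing over the forgotten arrows turns the first sum into $\mu_1\cdots\mu_n$ times the second.

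First I would upgrade the correspondence of Section~\ref{sect:Hurwitz} into a genuine bijection. The construction already assigns to each orbifold Hurwitz cover $f$ (with fixed branch locus $B=\{\xi^1,\dots,\xi^s\}$) an unarrowed $r$-Hurwitz graph $(\gam,D)$ as the dual of $f^{-1}(*)$. I would exhibit the inverse: given $(\gam,D)$, the cyclic dot data at each corner reconstructs the monodromy around $0$, $\infty$, and each $\xi^\a$, and hence $f$ up to isomorphism; conditions (1)--(5) in the definition of an $r$-Hurwitz graph are exactly what is needed for this monodromy data to be consistent, with \eqref{dot condition} forbidding the spurious loops that would correspond to two ramification points colliding. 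Because this reconstruction is canonical, a homeomorphism of surfaces carrying one cell decomposition to another corresponds precisely to a deck transformation of the cover, so $\Aut(\gam,D)\cong\Aut(f)$. This gives
\be
\sum_{(\gam,D)}\frac{1}{|\Aut(\gam,D)|}
= H_{g,n}^r(\mu_1,\dots,\mu_n),
\ee
the sum now being over unarrowed graphs.

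It then remains to account for the arrows by an orbit--stabilizer argument. Fix an unarrowed $(\gam,D)$. An arrow datum is a choice, for each $i=1,\dots,n$, of one of the $\mu_i$ dots clustered at Vertex $i$, so the set $A$ of arrow data has $|A|=\mu_1\cdots\mu_n$ elements, and $\Aut(\gam,D)$ acts on $A$. Isomorphism classes of arrowed graphs lying over $(\gam,D)$ are the orbits $\cO$ of this action, and the automorphism group of the arrowed graph $(\gam,\vec D)$ attached to $\cO$ is its stabilizer, of order $|\Aut(\gam,D)|/|\cO|$. Hence
\be
\sum_{\cO}\frac{1}{|\Aut(\gam,\vec D)|}
=\sum_{\cO}\frac{|\cO|}{|\Aut(\gam,D)|}
=\frac{|A|}{|\Aut(\gam,D)|}
=\frac{\mu_1\cdots\mu_n}{|\Aut(\gam,D)|}.
\ee
Summing over all unarrowed $(\gam,D)$ and combining with the previous display yields \eqref{equivalence}.

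The main obstacle is the bijection in the middle step, specifically the isomorphism $\Aut(\gam,D)\cong\Aut(f)$: one must check that the only automorphisms of a cell graph (cyclic rotations of half-edges at a vertex, per the earlier remark) that respect the dot configuration $D$ are exactly those induced by deck transformations, and conversely that every deck transformation fixes the labeled vertices and descends to a graph automorphism. Verifying that conditions (1)--(5), together with the labeling of vertices by the profile $(\mu_1,\dots,\mu_n)$ over $\infty$ and the cyclic dot order recording the rays $f^{-1}(\overline{0\infty})$, determine the monodromy uniquely is the technical heart; once that is in place, the arrow count is a formal orbit--stabilizer computation.
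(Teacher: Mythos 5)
Your strategy --- an automorphism-preserving dictionary between covers and unarrowed graphs, followed by an orbit--stabilizer count of arrow placements --- is genuinely different from the paper's proof, which never constructs such a dictionary: the paper checks the base case $\cH_{0,1}^r(r)=1=r\cdot H_{0,1}^r(r)$ and then observes that \eqref{ECF} is exactly the cut-and-join equation of \cite[Theorem~2.2]{BHLM} under the substitution $\cH_{g,n}^r=\mu_1\cdots\mu_n H_{g,n}^r$, so that \eqref{equivalence} follows by induction on the edge number $s=2g-2+d/r+n$. Your orbit--stabilizer step is correct as stated. The gap is the claim $\Aut(\gam,D)\cong\Aut(f)$ with $\Aut(f)$ the group of \emph{deck transformations}, i.e.\ automorphisms of $C$ over the identity of the target, and the paper's own example refutes it. Every cover occurring in the computation of $\cH_{0,2}^2(3,1)$ has trivial deck group: for $f(x)=(x^2-1)^2/x$, any deck transformation must separately fix the order-$3$ pole at $\infty$ and the simple pole at $0$, hence has the form $x\mapsto ax$, and $f(ax)=f(x)$ gives $(a^2x^2-1)^2=a(x^2-1)^2$, forcing $a=1$; the same argument applies to $(x^2-1)^2/(x-a)$. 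Consequently any sum of $1/|\Aut(f)|$ over these covers, with $\Aut(f)$ read as the deck group, is a positive \emph{integer}. But the graph-weighted count is $H_{0,2}^2(3,1)=3/2$: indeed, by your own orbit--stabilizer identity, the contribution $3/2$ of the left graph of Figure~\ref{fig:H02graph}, with $\mu_1\mu_2=3$ arrow placements, forces $|\Aut(\gam,D)|=2$ for that graph. So no automorphism-matching bijection of the kind you posit can exist.

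What the nontrivial graph automorphism actually reflects is the symmetry $x\mapsto -x$ of the source \emph{covering the involution} $y\mapsto -y$ of the target (note $f(-x)=-f(x)$), which exchanges the two simple branch points. That is, $\Aut(\gam,D)$ matches automorphisms of the \emph{topological type} of $f$ --- pairs $(\phi,\psi)$ with $f\circ\phi=\psi\circ f$, where $\psi$ may permute the branch points --- whereas deck transformations match automorphisms of the \emph{edge-labeled} graph produced by the paper's construction (edges labeled by the $p_\a$). Repairing your middle step therefore requires two things: (i) establishing the correspondence with these larger automorphism groups, which is delicate because a target homeomorphism moving the branch points also moves the ray $\overline{0\infty}$, so one must track how the dot configuration transforms under the compensating isotopy of the spoke-and-ray configuration $*$; and (ii) pinning down which automorphism weighting the definition of $H_{g,n}^r$ intends, since with the deck-group reading \eqref{equivalence} is false, and the consistency of the topological-type count with the normalization used elsewhere in the paper (the cut-and-join equation of \cite{BHLM} and the formulas \eqref{JPT}, where the stacky structure of maps to $\bP^1[r]$ enters) is itself a nontrivial statement. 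This is precisely the minefield the paper's inductive proof avoids: it needs only the single geometric evaluation $H_{0,1}^r(r)=1/r$ and delegates all remaining geometry to the combinatorial recursion \eqref{ECF}.
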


\begin{proof}
The simplest orbifold Hurwitz number is
$H_{0,1}^r(r)$, which counts double
Hurwitz numbers with the same profile
$(r)$ at both $0\in \bP^1$ and 
$\infty\in \bP^1$. There is only
one such map $f:\bP^1\lrar \bP^1$,
which is given by $f(x) = x^r$. 
Since the map has automorphism 
$\bZ/(r)$, we have $H_{0,1}^r(r) = 1/r$. 
Thus \eqref{equivalence} holds for the base 
case. 

We notice that \eqref{ECF} is exactly the
same as the cut-and-join equation of
\cite[Theorem~2.2]{BHLM},
after modifying the orbifold Hurwitz numbers
by multiplying $\mu_1\cdots\mu_n$. Since
the initial value is the same,
and the formulas are recursion based on 
$s=2g-2+\frac{d}{r}+ n$,  
\eqref{equivalence} holds by induction.
This completes the proof.
\end{proof}

\section{Construction of the mirror spectral curves
for  orbifold Hurwitz numbers}
\label{sect:spectral}

In the earlier work on simple and
orbifold Hurwitz numbers
in connection to the topological recursion
\cite{BHLM,BM,DLN,EMS,MZ}, the spectral curves
are determined by the infinite framing limit of the 
mirror curves to  toric Calabi-Yau (orbi-)threefolds.
The other ingredients of the topological recursion,
the differential forms $W_{0,1}$ and
$W_{0,2}$, are calculated by the Laplace
transform of the $(g,n)=(0,1)$ and 
$(0,2)$ cases of the ELSV \cite{ELSV} and
JPT \cite{JPT} formulas. Certainly the logic is
clear, but why these choices are the right ones
is not well explained.

In this section, we show that  the 
edge-contraction operations themselves determine
all the mirror ingredients, i.e.,
the spectral curve, $W_{0,1}$, and $W_{0,2}$.
The structure of the story is the following.
The edge-contraction formula \eqref{ECF}
is an equation among different values of $(g,n)$.
When restricted to $(g,n)= (0,1)$, it produces an
equation on $\cH_{0,1}^r(d)$ as a function
in one integer variable. The generating function
of $\cH_{g,n}^r(\mu_1,\dots,\mu_n)$ is 
reasonably complicated, but it can be
expressed rather nicely in terms of the
generating function of the $(0,1)$-values
$\cH_{0,1}^r(d)$, which is essentially
the spectral curve of the theory.
The edge-contraction formula \eqref{ECF}
itself has the Laplace transform that can
be calculated in the spectral curve coordinate.
Since \eqref{ECF} contains $(g,n)$ on each
side of the equation, to make it a genuine
recursion formula for functions with respect
to $2g-2+n$ in the stable range, we need
to calculate the generating functions of
$\cH_{0,1}^r(d)$ and $\cH_{0,2}^r(\mu_1,\mu_2)$,
and make the rest of \eqref{ECF} free of unstable
terms. The result is the topological recursion
of \cite{BHLM,EMS}.

Let us now start with the restricted \eqref{ECF} 
on $(0,1)$ invariants:
\be
\label{ECF01}
\left(\frac{d}{r} -1
\right)\cH_{0,1}^r (d)
=\half  d \sum_{\substack{\a+\b=d\\\a,\b\ge 1}}
\cH_{0,1}^r (\a)\cH_{0,1}^r (\b).
\ee
At this stage, we introduce a generating function
\be
\label{H01generating}
y=y(x)=\sum_{d=1}^\infty \cH_{0,1}^r (d) x^d.
\ee
In terms of this generating function, \eqref{ECF01}
is  a differential equation
\be
\label{ECFdiff}
\left(x^{r+1} \circ\frac{d}{dx}\circ \frac{1}{x^r}\right)
y = \half r x \frac{d}{dx} y^2,
\ee
or simply
$$
\frac{y'}{y}-ry'=\frac{r}{x}.
$$
Its unique solution is
$$
Cx^r=  y e^{-ry}
$$
with a constant of integration $C$.
As we noted in the previous section,
the recursion \eqref{ECF} does not determine
the initial value $\cH_{0,1}^r(d)$. 
For our graph enumeration problem, the values 
are
\be
\label{H01initial}
\cH_{0,1}^r(d) = 
\begin{cases}
0 \qquad 1\le d<r;
\\
1 \qquad d = r,
\end{cases}
\ee
which determine $C=1$.
Thus we find
\be
\label{r-Lambert}
x^r = y e^{-ry},
\ee
which is the $r$-Lambert curve of \cite{BHLM}.
This is indeed the spectral curve for the
orbifold Hurwitz numbers.

\begin{rem}
We note that
$r\cH_{0,1}^r(rm)$ satisfies the same
recursion equation \eqref{ECF01} for $r=1$,
with a different initial value. Thus essentially
orbifold Hurwitz numbers are determined by
the usual simple Hurwitz numbers.
\end{rem}

\begin{rem}
If we define $T_d = (d-1)! \cH_{0,1}^{r=1}(d)$,
then \eqref{ECF01} for $r=1$ is equivalent to
\eqref{tree}. This is the reason we consider the
tree recursion as the spectral curve for simple
and orbifold Hurwitz numbers.
\end{rem}

For the purpose of performing  analysis on
the spectral curve
\eqref{r-Lambert}, let us introduce a global
coordinate $z$ on the $r$-Lambert curve,
which is an analytic curve of genus $0$:
\be
\label{z}
\begin{cases}
x = x(z) := z e^{-z^r}\\
y =y(z) :=  z^r.
\end{cases}
\ee
We denote by $\Sigma\subset \bC^2$ this parametric
curve.
Let us introduce the generating
functions of  general $\cH_{g,n}^r$, which are
called \emph{free energies}:
\be
\label{FgnHur}
F_{g,n}(x_1,\dots,x_n):=
\sum_{\mu_1,\dots,\mu_n\ge 1}
\frac{1}{\mu_1\cdots \mu_n}
\cH_{g,n}^r (\mu_1,\dots,\mu_n)\prod_{i=1}^n
x_i ^{\mu_i}.
\ee
We also define the exterior derivative
\be
\label{WgnHur}
W_{g,n}(x_1,\dots,x_n):= d_1\cdots d_n F_{g,n}
(x_1,\dots,x_n),
\ee
which is a symmetric $n$-linear differential form.
By definition, we have
\be
\label{F01}
y=y(x) = x\frac{d}{dx} F_{0,1}(x).
\ee
The topological recursion requires the spectral
curve, $W_{0,1}$, and $W_{0,2}$. From
\eqref{WgnHur} and 
\eqref{F01}, 
we have
\be
\label{W01Hur}
W_{0,1}(x) = y\frac{dx}{x} = y d\log(x).
\ee

\begin{rem}
For many examples of topological recursion
such as ones considered in 
\cite{DMSS}, we often define
$W_{0,1} = ydx$, which is a holomorphic
$1$-form on the spectral curve. For Hurwitz theory,
due to \eqref{F01}, it is more natural to use
\eqref{W01Hur}.
\end{rem}

As a differential equation, we can solve
\eqref{F01} in a closed formula on the
spectral curve $\Sigma$ of \eqref{z}.
Indeed, the role of the spectral curve is
that the free energies, i.e., $F_{g,n}$'s, are 
actually  analytic functions defined on 
$\Sigma^n$. Although we define
$F_{g,n}$'s as a formal power series
in $(x_1,\dots,x_n)$ as generating functions,
they are analytic, and the domain of analyticity,
or the classical sense of \emph{Riemann surface},
is the spectral curve $\Sigma$.
The  coordinate change \eqref{z} gives us
\be
\label{xd/dx}
x\frac{d}{d x} = \frac{z}{1-rz^r}
\frac{d}{d z},
\ee
hence  \eqref{F01} is equivalent to 
$$
z^{r-1}(1-rz^{r}) = \frac{d}{dz} F_{0,1}
\big(x(z)\big).
$$
Since $z=0\Longrightarrow x=0\Longrightarrow
F_{0,1}(x)=0$, we find
\be
\label{F01(z)}
F_{0,1}\big(x(z)\big) = \frac{1}{r} z^r -\half z^{2r}.
\ee

The calculation of $F_{0,2}$ is done
similarly, by restricting \eqref{ECF} to the
$(g,n)=(0,1)$ and $(0,2)$ terms. Assuming that
$\mu_1+\mu_n = mr$, we have
\begin{multline}
\label{ECF02}
\left(\frac{d}{r}-1\right)\cH_{0,2}^r(\mu_1,\mu_2)
\\
=\mu_1\mu_2 \cH_{0,1}^r (\mu_1+\mu_2)
+ \mu_1 \sum_{\substack{\a+\b=\mu_1\\\a,\b>0}}
\cH_{0,1}^r(\a)\cH_{0,2}^r(\b,\mu_2)
+ \mu_2 \sum_{\substack{\a+\b=\mu_2\\\a,\b>0}}
\cH_{0,1}^r(\a)\cH_{0,2}^r(\mu_1,\b).
\end{multline}
As a special case of \cite[Lemma~4.1]{BHLM},
this equation translates into a differential equation
for $F_{0,2}$:
\begin{multline}
\label{PDEF02x}
\frac{1}{r}\left(x_1\frac{\partial}{\partial x_1}
+ x_2\frac{\partial}{\partial x_2}\right)
F_{0,2}(x_1,x_2)
\\
=
\frac{1}{x_1-x_2}\left(
x_1^2\frac{\partial}{\partial x_1}F_{0,1}(x_1)-
x_2^2\frac{\partial}{\partial x_2}F_{0,1}(x_2)
\right)
-
\left(
x_1\frac{\partial}{\partial x_1}F_{0,1}(x_1)+
x_2\frac{\partial}{\partial x_2}F_{0,1}(x_2)
\right)
\\
+
\left(
x_1\frac{\partial}{\partial x_1}F_{0,1}(x_1)
\right)
\left(
x_1\frac{\partial}{\partial x_1}F_{0,2}(x_1,x_2)
\right)
+
\left(
x_2\frac{\partial}{\partial x_2}F_{0,1}(x_2)
\right)
\left(
x_2\frac{\partial}{\partial x_2}F_{0,2}(x_1,x_2)
\right).
\end{multline}
Denoting by $x_i = x(z_i)$
and using \eqref{xd/dx}, 
\eqref{PDEF02x} becomes simply
\be
\label{PDEF02z}
\frac{1}{r}\left(
z_1\frac{\partial}{\partial z_1}+
z_2\frac{\partial}{\partial z_2}
\right)
F_{0,2}\big(x(z_1),x(z_2)\big)
=
\frac{x_1 z_1^r-x_2 z_2^r}{x_1-x_2}
-(z_1^r+z_2^r)
\ee
on the spectral curve $\Sigma$.
This is a linear partial differential equation
of the first order
with analytic coefficients in the neighborhood
of $(0,0)\in \bC^2$, hence
by the Cauchy-Kovalevskaya theorem,  it has 
the unique analytic 
solution around the origin of $\bC^2$
for any Cauchy problem.
Since the only analytic solution to 
the homogeneous equation
$$
\left(
z_1\frac{\partial}{\partial z_1}+
z_2\frac{\partial}{\partial z_2}
\right)
f(z_1,z_2)=0
$$
is a constant, the initial condition
$F_{0,2}(0,x_2) = F_{0,2}(x_1,0) = 0$ determines
the unique solution of \eqref{PDEF02z}.

\begin{prop}
We have a closed formula for $F_{0,2}$
in the $z$-coordinates:
\be
\label{F02z}
F_{0,2}\big(x(z_1),x(z_2)\big) = \log\frac{z_1-z_2}{x(z_1)-x(z_2)}
-(z_1^r+z_2^r).
\ee
\end{prop}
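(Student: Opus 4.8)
The plan is to exploit the uniqueness already in hand: since \eqref{PDEF02z} is a first-order linear PDE whose only analytic homogeneous solution is a constant, and the Cauchy data $F_{0,2}(0,x_2)=F_{0,2}(x_1,0)=0$ pins down that constant, it suffices to \emph{verify} that the proposed right-hand side of \eqref{F02z} is analytic near the origin, solves \eqref{PDEF02z}, and vanishes on the two coordinate axes. No construction is needed, only a check.

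First I would record the single identity that drives everything. Writing $x(z)=ze^{-z^r}$, a direct differentiation gives
\be
z\,x'(z) = x(z)\,(1-rz^r),
\ee
equivalently the reciprocal relation $z/x(z)=e^{z^r}$. Abbreviate $x_i=x(z_i)$ and let $D=z_1\partial_{z_1}+z_2\partial_{z_2}$ be the Euler operator appearing (up to the factor $1/r$) on the left of \eqref{PDEF02z}. Applying $D$ term by term to $G:=\log(z_1-z_2)-\log(x_1-x_2)-(z_1^r+z_2^r)$, one finds $D\log(z_1-z_2)=1$, $D(z_1^r+z_2^r)=r(z_1^r+z_2^r)$, and, using the displayed identity to replace $z_i\,x'(z_i)$,
\be
D\log(x_1-x_2)=\frac{x_1(1-rz_1^r)-x_2(1-rz_2^r)}{x_1-x_2}=1-r\,\frac{x_1z_1^r-x_2z_2^r}{x_1-x_2}.
\ee
Combining these yields $\tfrac1r DG=\frac{x_1z_1^r-x_2z_2^r}{x_1-x_2}-(z_1^r+z_2^r)$, which is exactly the right-hand side of \eqref{PDEF02z}.

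For the initial condition I would let $z_2\to0$ (so $x_2\to0$): the log term tends to $\log(z_1/x_1)=z_1^r$ by the reciprocal relation, which cancels the $-z_1^r$, giving $G(z_1,0)=0$, and the vanishing at $z_1=0$ follows by symmetry. The only point that needs a word of care, and the closest thing to an obstacle, is analyticity of $G$ across the diagonal $z_1=z_2$: since $x(z)=z-z^{r+1}+\cdots$ has $x'(0)=1$, the quotient $(x_1-x_2)/(z_1-z_2)$ extends to an analytic, nonvanishing function near the origin, so $\log\frac{z_1-z_2}{x_1-x_2}$ is genuinely analytic there (with the branch normalized to $0$ at the origin) and the apparent singularity at $z_1=z_2$ is removable. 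With analyticity, the PDE, and the Cauchy data all confirmed, uniqueness forces $F_{0,2}\big(x(z_1),x(z_2)\big)=G$, which is \eqref{F02z}.
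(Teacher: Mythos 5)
Your proof is correct and takes essentially the same route as the paper's: verify that the proposed expression solves \eqref{PDEF02z} via the identity $z\,x'(z)=x(z)(1-rz^r)$, check the vanishing Cauchy data using $z/x(z)=e^{z^r}$, and invoke the uniqueness already established from the Cauchy--Kovalevskaya argument. Your extra remark on removability of the apparent singularity along the diagonal $z_1=z_2$ merely makes explicit what the paper asserts in the first line of its proof.
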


\begin{proof}
We first note  that 
$\log\frac{z_1-z_2}{x(z_1)-x(z_2)}$ is holomorphic
around $(0,0)\in \bC^2$. 
\eqref{F02z} being a solution to \eqref{PDEF02z}
is a straightforward calculation that can be verified 
as follows:
\begin{align*}
&\left(
z_1\frac{\partial}{\partial z_1}+
z_2\frac{\partial}{\partial z_2}
\right)
\log\frac{z_1-z_2}{x(z_1)-x(z_2)}
\\
&=
\frac{z_1-z_2}{z_1-z_2}
-\frac{z_1e^{-z_1^r}(1-rz_1^r)-
z_2e^{-z_2^r}(1-rz_2^r)}{x_1-x_2}
\\
&=
1
-\frac{x_1-x_2}{x_1-x_2}
+r\frac{x_1z_1^r-x_2z_2^r}{x_1-x_2}
=r\frac{x_1z_1^r-x_2z_2^r}{x_1-x_2}.
\end{align*}
Since
$
F_{0,2}\big(x(0),x(z_2)\big) 
=\log  e^{z_2^r} - z_2^r = 0,
$
\eqref{F02z} is the desired unique solution.
\end{proof}

In \cite{BHLM}, the functions  
\eqref{F01(z)}
and
\eqref{F02z} are derived
by directly computing the Laplace transform
of the JPT formulas \cite{JPT}
\be
\label{JPT}
\begin{aligned}
&H_{0,1}^r(d) = \frac{d^{\lfloor \frac{d}{r}\rfloor-2}}
{\lfloor \frac{d}{r}\rfloor!},
\\
&H_{0,2}^r (\mu_1,\mu_2) = 
\begin{cases}
r^{\la \frac{\mu_1}{r}\ra +\la \frac{\mu_1}{r}
\ra}
\frac{1}{\mu_1+\mu_2}
\frac{\mu_1^{\lfloor \frac{\mu_1}{r}\rfloor}
\mu_2^{\lfloor \frac{\mu_2}{r}\rfloor}
}
{\lfloor \frac{\mu_1}{r}\rfloor!
\lfloor \frac{\mu_2}{r}\rfloor!}
\qquad \mu_1+\mu_2 \equiv 0 \mod r
\\
0\hskip1.96in \text{otherwise}.
\end{cases}
\end{aligned}
\ee
Here, 
$q =  \lfloor q\rfloor
+\la q\ra$ gives the decomposition of
a rational number $q\in \bQ$ into its
floor  and the fractional part.
We have thus recovered  \eqref{JPT}
from the
edge-contraction formula alone, which are
the $(0,1)$ and $(0,2)$ cases
 of the ELSV formula
for the orbifold Hurwitz numbers.

\begin{ack}
The paper is based a series of lectures
by M.M.\ at Mathematische Arbeitstagung 2015, 
Max-Planck-Institut f\"ur Mathematik in Bonn.
The authors  are grateful to  
the American Institute of Mathematics in California, 
the Banff International Research Station,
 the Institute for Mathematical
Sciences at the National University of Singapore,
 Kobe University, Leibniz Universit\"at 
 Hannover, the Lorentz Center for Mathematical 
 Sciences, Leiden, 
 Max-Planck-Institut f\"ur Mathematik in Bonn, 
 and Institut Henri Poincar\'e, Paris,
for their hospitality and financial support during
the authors' stay for collaboration. 
 The research of O.D.\ has been supported by
 GRK 1463 \emph{Analysis,
Geometry, and String Theory} at 
Leibniz Universit\"at 
 Hannover and MPIM.
The research of M.M.\ has been supported 
by 
NSF grants DMS-1309298, 
DMS-1619760, DMS-1642515, 
and NSF-RNMS: Geometric Structures And 
Representation Varieties (GEAR Network, 
DMS-1107452, 1107263, 1107367).
\end{ack}


\providecommand{\bysame}{\leavevmode\hbox to3em{\hrulefill}\thinspace}

\bibliographystyle{amsplain}

\end{document}